\newtheorem{theorem}{Theorem}
\newtheorem{prop}{Proposition}
\title{On the Spectrum of Locally Linear Graphs}
\author{Reimbay Reimbayev}
\date{} 
\begin{document}
\maketitle

\begin{abstract}
For a locally linear graph $G$, which is a graph built out of triangles, it is possible to construct another graph $G^*$ that would consist of triangles of $G$ as vertices, while sharing (or not sharing) a common vertex between a pair of triangles would define a binary relation for edges of $G^*$. In this paper we show that the spectrum of $G^*$ is uniquely defined by $G$. We will also show some structural similarities of these graphs; in particular, that the number of quadrilaterals and pentagons in both graphs are the same; that $G^*$ does not contain $K_4-e$ and $K_{1,4}$; and that $G$ can be reconstructed from $G^*$.

\end{abstract}
\section*{Introduction}

An undirected graph $G$ is called locally linear if for any vertex $v \in V(G)$ an induced on its neighbors graph  $G[N(v)]$ is linear (regular of degree 1), where $N(v) = \{w\in V(G) | w\sim v, w \neq v)\}$ \cite{Dalibor}. Froncek \cite{Dalibor} showed that this definition is equivalent to the property that every edge of the graph $G$ with no isolated vertices belongs to a unique triangle. In that sense, connected locally linear graphs with no cycles, other than those triangles, are also called triangular cacti and studied in information science as networks immune to line failures \cite{Farley}. In extremal graph theory, locally linear graphs have gained a prominence with regard to one of the formulations of Ruzsa-Szemeredi problem, concerning the maximum number of edges in such graphs.

Thus locally linear graph $G$ is a graph that consists of triangles such that two triangles share at most one common vertex. This binary relation between any two triangles in $G$, which we  can call ``connected'' if they share one common vertex and ``not connected'' otherwise, naturally associates with $G$ a new graph, call it $G^*$. Vertex set of $G^*$ is the set of all triangles in $G$; and adjacency relationship is defined by connectedness property above, i.e. $x,y \in V(G^*): x\sim y$ if the corresponding triangles in $G$ share a common vertex.

To our best efforts, we were not able to identify the existing term, if there any, for such graphs. And although it is very tempting to call this graph $G^*$ with a special term, triangular-built, backbone, or something in that fashion, we will avoid adding a new nomenclature and will refer further on to likewise constructed graphs simply as $G^*$. In this paper we will study the properties of such graphs.

\section*{Forbidden Subgraphs in $G^*$}

First we consider two straightforward properties of $G^*$ answering to the question what kind of induced subgraphs it cannot contain. In a standard graph theory nomenclature, $K_4-e$, a complete graph on four vertices with an edge deleted, is called diamond \cite{GraphClasses}; $K_{1,4}$, a complete bipartite graph, is also denoted $S_4$, a four-star.

\begin{prop}
$G^*$ does not contain induced $K_4-e$.
\label{prop1}
\end{prop}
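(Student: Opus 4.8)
The plan is to argue by contradiction. Suppose triangles $T_a,T_b,T_c,T_d$ of $G$ induce a diamond in $G^*$ with $T_aT_c$ the missing edge, so in $G$ the triangles $T_a,T_c$ are vertex-disjoint while every other pair meets in exactly one vertex (recall that in a locally linear graph two triangles share at most one vertex). The first step is to record a dichotomy for any triangle of $G^*$, i.e.\ for any three triangles $T_1,T_2,T_3$ of $G$ that are pairwise adjacent in $G^*$: \emph{either} $T_1\cap T_2\cap T_3\neq\emptyset$, \emph{or} the three vertices $T_1\cap T_2$, $T_1\cap T_3$, $T_2\cap T_3$ are pairwise distinct. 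Indeed, if two of them coincided, that vertex would lie in all three triangles, and then two of the triangles would share it \emph{together with} a second common vertex, contradicting the at-most-one-vertex bound.

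The diamond contains exactly the two triangles $\{T_a,T_b,T_d\}$ and $\{T_b,T_c,T_d\}$; applying the dichotomy to each, and using the $T_a\leftrightarrow T_c$ symmetry, leaves two cases, each closed by a single local-linearity computation. \emph{Common-vertex case:} say $T_a,T_b,T_d$ all contain $p$; then $T_b\cap T_d=\{p\}$, so if $T_b,T_c,T_d$ also had a common vertex it would be $p$, forcing $p\in T_a\cap T_c=\emptyset$, so $\{T_b,T_c,T_d\}$ is of the distinct type, with $r\in T_b\cap T_c$ and $s\in T_c\cap T_d$. Now examine $G[N(p)]$, a perfect matching: each triangle through $p$ contributes one of its edges, so $r$ is matched to the third vertex of $T_b$; but $r,s\in N(p)$ and $rs\in E(G)$ since both lie in $T_c$, and $s$ is not that third vertex of $T_b$ (else $T_b\cap T_c\supseteq\{r,s\}$), hence $r$ has two neighbours in $G[N(p)]$ -- contradiction. \emph{Both-distinct case:} since $|T_b\cap T_d|=1$, the two triples share the vertex $T_b\cap T_d=\{t\}$ and one gets $T_b=\{p,r,t\}$, $T_d=\{q,s,t\}$ with $p\in T_a\cap T_b$, $q\in T_a\cap T_d$, $r\in T_b\cap T_c$, $s\in T_c\cap T_d$. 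Run the same argument at $G[N(t)]$: the edge of $T_b$ matches $p$ to $r$, but $p,q\in N(t)$ and $pq\in E(G)$ since both lie in $T_a$, and $q\neq r$ (else $q\in T_a\cap T_c$), so $p$ again has degree $\geq 2$ in $G[N(t)]$ -- contradiction.

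The computations are short; I expect the only friction to be the distinctness bookkeeping -- each time one introduces an ``extra'' edge inside some $N(v)$, one must verify its two endpoints are genuinely distinct and distinct from the matched endpoint -- and in every instance this reduces to the two facts $T_a\cap T_c=\emptyset$ and ``any two triangles of $G$ meet in at most one vertex''. For orientation, the most degenerate configuration, $T_a\cap T_b=T_a\cap T_d$ \emph{and} $T_b\cap T_c=T_c\cap T_d$, is already handled inside the common-vertex case, where it collapses at once.
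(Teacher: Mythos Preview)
Your argument is correct, but it takes a noticeably longer path than the paper. You set up a genuine dichotomy for a triangle in $G^*$ --- either the three triangles of $G$ share a common vertex, or their three pairwise intersection points are distinct --- and then eliminate each resulting configuration by a local degree count in $G[N(v)]$. The paper instead proves the stronger lemma outright: the ``distinct'' branch of your dichotomy is \emph{always} impossible, because if $T_1\cap T_2=\{a\}$, $T_2\cap T_3=\{b\}$, $T_1\cap T_3=\{c\}$ are distinct, then $ab,bc,ca$ are edges (each a side of one of the $T_i$), so $\{a,b,c\}$ is itself a triangle, and the edge $ab$ then lies in both $T_2$ and $\{a,b,c\}$, violating local linearity. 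With that lemma in hand, both triples $\{T_a,T_b,T_d\}$ and $\{T_b,T_c,T_d\}$ in the diamond are forced to have a common vertex, and since $|T_b\cap T_d|=1$ these two common vertices coincide, putting a point in $T_a\cap T_c=\emptyset$. Your $G[N(v)]$ arguments are really this same ``edge in two triangles'' contradiction in disguise (saying a vertex has degree $\geq 2$ in $G[N(v)]$ is exactly saying some edge through $v$ lies in two triangles), so the underlying mechanism is identical --- you just discover it separately in each case rather than isolating it once as a lemma.
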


\begin{proof}
In a locally linear graph $G$ represented by $G^*$, any three mutually connected triangles are connected through a single common vertex. Otherwise there would be an edge that belongs to more than one triangle. Using this argument twice for $K_4-e$ configuration of four triangles in $G$ we conclude that all four triangles would have a common vertex.
\end{proof}

\begin{prop}
$G^*$ does not contain induced $K_{1,4}$.
\label{prop2}
\end{prop}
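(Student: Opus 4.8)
The plan is to argue by contradiction, with a pigeonhole count on the three vertices of the central triangle. Suppose $G^*$ contains an induced $K_{1,4}$, whose central vertex corresponds to a triangle $T$ of $G$ and whose four leaves correspond to triangles $T_1, T_2, T_3, T_4$ of $G$. By the definition of $G^*$, each $T_i$ is connected to $T$, meaning $T_i$ and $T$ share a common vertex; and by local linearity of $G$ this shared vertex is unique, since two distinct triangles of $G$ meeting in two vertices would force a common edge lying in two triangles. Hence each $T_i$ selects exactly one vertex of $T$, and $T$ has only three vertices.

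Since there are four triangles $T_1, \dots, T_4$ but only three vertices in $T$, the pigeonhole principle gives indices $i \neq j$ such that $T_i$ and $T_j$ meet $T$ in the same vertex $v$. Then $v \in V(T_i) \cap V(T_j)$, so $T_i$ and $T_j$ share a common vertex and are therefore adjacent in $G^*$. This contradicts the requirement that the four leaves of an induced $K_{1,4}$ form an independent set, completing the argument.

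I do not expect a genuine obstacle here: the only points to verify are that $T$ and the $T_i$ are pairwise distinct as vertices of $G^*$ (immediate, since they are distinct vertices of a $K_{1,4}$) and that ``connected'' in $G^*$ is exactly ``shares a vertex in $G$'' (the definition). The substance is purely the counting mismatch between the three vertices of a triangle and the four pairwise non-adjacent neighbors demanded by $K_{1,4}$. It is worth remarking that the same argument in fact shows that the neighborhood of every vertex of $G^*$ has independence number at most $3$, which is a strictly stronger statement than $K_{1,4}$-freeness; for the proposition, however, only the displayed contradiction is needed.
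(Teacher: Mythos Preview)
Your proof is correct and follows essentially the same approach as the paper: both argue that the central triangle has only three vertices, so among four neighboring triangles two must attach at the same vertex and hence be adjacent, contradicting the independence of the leaves. Your write-up is simply more explicit about the pigeonhole step and the uniqueness of the shared vertex.
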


\begin{proof}
A triangle in $G$ can be connected to at most three other mutually disconnected triangles, as it has only three vertices. The fourth triangle would necessarily share the vertex with one of the previous three triangles , thus becoming connected to it.
\end{proof}

\noindent As a consequence of the two previous statements we have the next one.

\begin{prop}
Any two nonadjacent vertices of $G^*$ can have at most three common neighbors.
\label{prop3}
\end{prop}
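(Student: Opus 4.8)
The plan is to argue by contradiction and reduce everything to the two forbidden configurations already established. Suppose $u$ and $v$ are nonadjacent vertices of $G^*$ that have four common neighbors $w_1, w_2, w_3, w_4$ (necessarily distinct). I would then look only at the induced subgraph on these six vertices and try to locate either a $K_4 - e$ or a $K_{1,4}$.

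First I would examine the adjacencies among the $w_i$. If some pair, say $w_i$ and $w_j$, were adjacent, then the four vertices $u, v, w_i, w_j$ would carry the edges $u w_i$, $u w_j$, $v w_i$, $v w_j$ (because each $w$ is a common neighbor) together with $w_i w_j$, while lacking the edge $uv$ (by hypothesis $u \not\sim v$). That is precisely an induced $K_4 - e$, contradicting Proposition~\ref{prop1}. Hence no two of $w_1, w_2, w_3, w_4$ can be adjacent, i.e.\ they form an independent set.

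Now combine this with the fact that $u$ is adjacent to each of $w_1, w_2, w_3, w_4$: the five vertices $u, w_1, w_2, w_3, w_4$ induce a star $K_{1,4}$ centered at $u$, contradicting Proposition~\ref{prop2}. Therefore four common neighbors are impossible, and any two nonadjacent vertices of $G^*$ have at most three common neighbors.

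I do not expect any real obstacle here — the argument is a short case split. The only point to be slightly careful about is verifying that the two small subgraphs extracted are genuinely \emph{induced}; but since each involves only four or five vertices and every relevant pair is pinned down by the hypotheses ($u \not\sim v$, each $u \sim w_i$ and $v \sim w_i$, and the assumed (non)adjacency among the $w_i$), this check is immediate.
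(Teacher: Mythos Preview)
Your argument is correct and is essentially the same as the paper's: the paper first invokes Proposition~\ref{prop2} to force an adjacent pair among the four common neighbors and then derives the $K_4-e$ via Proposition~\ref{prop1}, whereas you phrase this as an explicit dichotomy (adjacent pair $\Rightarrow K_4-e$; no adjacent pair $\Rightarrow K_{1,4}$). The logical content is identical.
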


\begin{proof}
Assume the opposite, i.e. there exist two nonadjacent vertices from $G^*$ with at least four common neighbors. Proposition \ref{prop2} guaranties that among any four common neighbors there will be at least one pair of adjacent vertices. This pair of adjacent neighbors and original two nonadjacent vertices then comprise a diamond, $K_4-e$, which according to Proposition \ref{prop1} is prohibited.
\end{proof}

\noindent We can say more about the structure of $G^*$. The next two propositions are dealing with invariants in $G$ and $G^*$ such as the number of cycles of particular length. Below, by quadrilaterals and pentagons we mean induced subgraphs isomorphic to cycles $C_4$ and $C_5$ respectively.

\begin{prop}
The graphs $G$ and $G^*$ have the same number of quadrilaterals.
\label{prop4}
\end{prop}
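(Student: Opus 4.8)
The plan is to exhibit an explicit bijection between the quadrilaterals of $G$ and those of $G^*$, where, as stated, a quadrilateral means a four-element vertex set whose induced subgraph is $C_4$. Throughout I use the characterization that in a locally linear graph every edge lies in exactly one triangle; isolated vertices of $G$ play no role, since they lie in no $C_4$. Given a quadrilateral $S=\{a,b,c,d\}$ of $G$ with edges $ab,bc,cd,da$ and non-edges $ac,bd$, I would map it to $\Phi(S)=\{T_{ab},T_{bc},T_{cd},T_{da}\}$, where $T_{xy}$ denotes the unique triangle of $G$ on the edge $xy$.

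First I would verify that $\Phi(S)$ is indeed a quadrilateral of $G^*$. The third vertex (apex) of $T_{ab}$ is adjacent to both $a$ and $b$, so it cannot equal $c$ or $d$ without producing one of the forbidden chords $ac,bd$; the analogous statement holds for each of $T_{bc},T_{cd},T_{da}$, and a short case check then shows these four triangles are distinct. Consecutive ones are adjacent in $G^*$ because they share the evident vertex ($b\in T_{ab}\cap T_{bc}$, and so on). For the non-adjacencies, the apex observations above force a common vertex of $T_{ab}$ and $T_{cd}$ to be an apex of both; calling it $p$, the vertex $p$ is then adjacent to all of $a,b,c,d$, so the edge $pa$ lies in the two distinct triangles $\{p,a,b\}$ and $\{p,a,d\}$, violating local linearity. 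The same argument rules out an edge between $T_{bc}$ and $T_{da}$. Hence $\Phi(S)$ induces $C_4$ in $G^*$.

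Next I would construct the inverse. Given a quadrilateral $\{T_1,T_2,T_3,T_4\}$ of $G^*$ with edges $T_1T_2,T_2T_3,T_3T_4,T_4T_1$ and non-edges $T_1T_3,T_2T_4$, local linearity gives $|T_i\cap T_j|=1$ for consecutive $i,j$, so I may write $\{v_{12}\}=T_1\cap T_2$, and likewise $v_{23},v_{34},v_{41}$, and set $\Psi(\{T_1,T_2,T_3,T_4\})=\{v_{12},v_{23},v_{34},v_{41}\}$. These four vertices are distinct: $v_{12}=v_{23}$ would place a common vertex in $T_1$ and $T_3$, and $v_{12}=v_{34}$ would place one in all four triangles, each contradicting $T_1\not\sim T_3$; the remaining coincidences are symmetric. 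Consecutive $v$'s are adjacent in $G$, since they lie in a common $T_i$. If an opposite pair were adjacent, say $v_{41}\sim v_{23}$, then $\{v_{41},v_{12},v_{23}\}$ and $\{v_{41},v_{23},v_{34}\}$ would be two distinct triangles on the edge $v_{41}v_{23}$ (distinct because $v_{12}\neq v_{34}$), again impossible. So $\Psi(\{T_1,T_2,T_3,T_4\})$ induces $C_4$ in $G$.

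Finally I would check that $\Phi$ and $\Psi$ are mutually inverse, which amounts to unwinding the definitions: applying $\Psi$ to $\Phi(S)$ reads off the common vertices of consecutive triangles, which are exactly $b,c,d,a$, and so returns $S$; conversely, applying $\Phi$ to the output of $\Psi$ assigns to each side of the recovered $C_4$ the unique triangle of $G$ through it, which is the corresponding $T_i$. This yields the desired bijection, so $G$ and $G^*$ have equally many quadrilaterals. I expect the only real care to be needed in two places — that the four objects produced by each map are genuinely distinct, and that the ``opposite'' pair in each direction is a non-edge — both of which come down to the single principle that an edge of $G$ lies in exactly one triangle, applied around the four edges of the cycle.
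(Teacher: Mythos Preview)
Your proof is correct and follows essentially the same bijection as the paper, mapping a quadrilateral of $G$ to the four triangles through its edges and a quadrilateral of $G^*$ to the four pairwise intersection vertices. Your treatment is in fact more careful than the paper's: you explicitly verify distinctness of the four objects on each side and handle the opposite non-edges by a direct ``one edge in two triangles'' contradiction, whereas the paper leans on the observation (from Proposition~\ref{prop1}) that three mutually connected triangles share a common vertex.
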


\begin{proof}
We prove the statement by showing a bijection from the set of all quadrilaterals in $G^*$ to the set of all quadrilaterals of $G$.

\textbf{One-to-one:} Given a quadrilateral $q^*=(x_1,x_2,x_3,x_4)$, where $x_i\in G^*$ and are triangles in $G$, denote their unique intersections in $G$: as $x_{1,2},x_{2,3},x_{3,4},x_{4,1}$. This vertices with four sides from each triangle make up a quadrilateral $q$ in $G$. And this is the only quadrilateral that is possible to construct on the sides of the given four triangles.

\textbf{Onto:} Given a quadrilateral $q$ from $G$, each edge of $q$ is a side of a distinct triangle (or else $q$ is not a quadrilateral). In order to show that these four triangles comprise a quadrilateral in $G^*$ we have to show that the opposite triangles are not connected. They are indeed cannot be connected, otherwise three mutually connected triangles would have a common vertex and two vertices of $q$ would collapse into one destroying the quadrilateral.
\end{proof}

\noindent Using similar arguments we can push a bit further and consider pentagons. Every pentagon in $G^*$ uniquely defines a sequence of triangles in $G$ connected through distinct vertices. Due to local linearity of $G$, those vertices comprising a closed cycle of length five do not contain any other ``idle'' vertices of triangles and are not adjacent to each other except of those that are already belong to the same side of a triangle. On the other hand every pentagon $C_5$ in $G$ defines a sequence of five distinct triangles comprising a cycle in $G^*$. None of the triangles are connected to none other than the neighboring two triangles. Thus the number of pentagons is an invariant as well and the statement below follows.

\begin{prop}
The graphs $G$ and $G^*$ have the same number of pentagons.
\label{prop5}
\end{prop}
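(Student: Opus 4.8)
The plan is to mirror the proof of Proposition~\ref{prop4}: build an explicit bijection $\phi$ from the pentagons of $G^*$ onto the pentagons of $G$. Represent a pentagon of $G^*$ as $p^*=(x_1,x_2,x_3,x_4,x_5)$ with indices read modulo $5$, each $x_i$ a triangle of $G$. Since consecutive triangles $x_i,x_{i+1}$ are adjacent in $G^*$, they meet in a unique common vertex $v_i:=x_i\cap x_{i+1}$ of $G$; and because $v_{i-1}$ and $v_i$ are both vertices of $x_i$, they are adjacent in $G$. Hence $v_1v_2v_3v_4v_5$ is at least a closed walk of length $5$, and I would set $\phi(p^*)$ to be this walk, then prove it is an induced $C_5$.

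For well-definedness I would first show the $v_i$ are pairwise distinct: if $v_i=v_j$, some triangle $x_k$ contains the common vertex of two non-consecutive triangles of $p^*$, i.e., a chord of the induced cycle $p^*$ would be present in $G^*$, which is impossible. Next, the cycle $v_1v_2v_3v_4v_5$ must be chordless in $G$: if $v_i\sim v_{i+2}$, the edge $v_iv_{i+2}$ lies on a unique triangle $T$, and using distinctness of the $v_i$ together with chordlessness of $p^*$ one checks that $T\notin\{x_1,\dots,x_5\}$. Then $T$ is connected in $G^*$ to $x_i$ and $x_{i+1}$ (both contain $v_i$) and to $x_{i+2}$ (contains $v_{i+2}$), so $\{T,x_i,x_{i+1},x_{i+2}\}$ induces $K_4-e$ in $G^*$ (the only non-adjacent pair being $x_i,x_{i+2}$), contradicting Proposition~\ref{prop1}. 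Injectivity is then formal, exactly as in Proposition~\ref{prop4}: each of the five edges of $\phi(p^*)$ carries a unique triangle of $G$, those triangles are precisely $x_1,\dots,x_5$, and an induced $C_5$ is recovered from its vertex set.

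For surjectivity I would take an induced pentagon $p=v_1v_2v_3v_4v_5$ of $G$ and let $T_i$ be the unique triangle on the edge $v_iv_{i+1}$, with apex $w_i$ (the vertex of $T_i$ not on $p$). The $T_i$ are distinct — equality of consecutive triangles would force the chord $v_iv_{i+2}$, equality of non-consecutive triangles would force a single triangle to carry four distinct vertices — and consecutive $T_i,T_{i+1}$ share $v_{i+1}$, hence are adjacent in $G^*$. The substantive point is that non-consecutive $T_i,T_{i+2}$ share no vertex: for a putative common vertex $c$, if $c$ lies on $p$ then it is the apex of one of the two triangles and that forces a diagonal of $p$ to be present, contradicting that $p$ is induced; while if $c$ does not lie on $p$ it must be the common apex $w_i=w_{i+2}$, which is then adjacent to four consecutive vertices of $p$, so $G[N(c)]$ contains a path on four vertices and cannot be regular of degree $1$, contradicting local linearity. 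Thus $(T_1,\dots,T_5)$ is an induced pentagon of $G^*$ whose successive intersections are the $v_i$, so $\phi$ maps it to $p$.

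The bookkeeping (the walk structure, the consecutive adjacencies, the formal injectivity) I expect to be painless. The main obstacle is the two ``no unexpected adjacency'' statements --- that $\phi(p^*)$ stays chordless in $G$, and that non-consecutive $T_i$ stay disconnected in $G^*$. The first reduces cleanly to the excluded $K_4-e$ of Proposition~\ref{prop1}. The second carries one stubborn sub-case, where the two non-consecutive triangles share their apexes; there the forbidden-subgraph propositions do not apply directly and one must fall back on local linearity itself. Making that sub-case airtight is, I think, the crux of the argument.
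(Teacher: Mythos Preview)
Your proposal is correct and follows essentially the same bijection strategy that the paper sketches in the paragraph preceding Proposition~\ref{prop5}: map a pentagon of $G^*$ to the cycle on the five intersection vertices, and conversely map a pentagon of $G$ to the cycle on the five triangles supporting its edges. Your write-up is in fact considerably more careful than the paper's, which asserts distinctness, chordlessness, and non-adjacency of non-consecutive triangles without justification; your explicit use of Proposition~\ref{prop1} for the chordless step and the local-linearity argument for the shared-apex sub-case fill exactly the gaps the paper leaves open.
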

 
\noindent Note that for cycles of length six and further it does not hold anymore. The relationship brakes up in both ways (Figure \ref{fig1}).

\begin{figure}
	\includegraphics[width=0.6\textwidth]{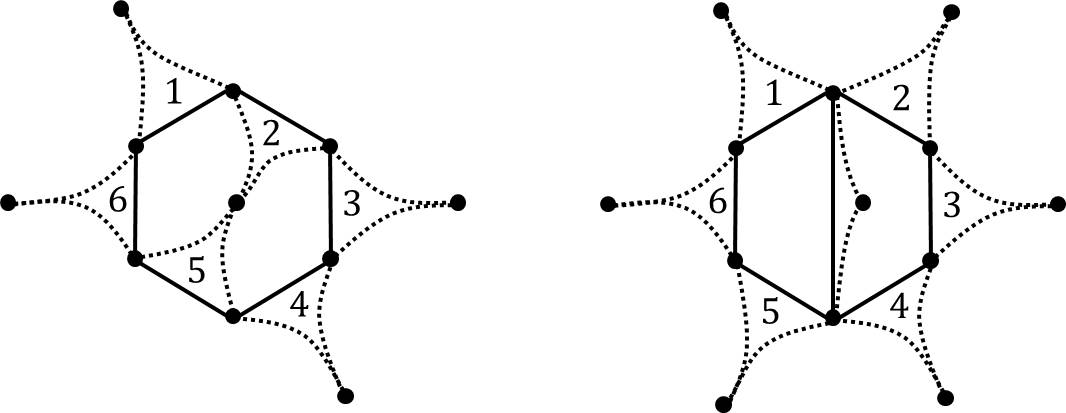}
		\centering
		\caption{Examples of hexagons that do not translate from $G$ to $G^*$ and vice versa. Left: triangles 1-2-3-4-5-6 do not comprise $C_6$ in $G^*$ while based on $C_6$ in $G$; Right: Triangles 1-2-3-4-5-6 although constituting $C_6$ in $G^*$ do not form $C_6$ in $G$.}
		\label{fig1}
\end{figure}

\section*{Characteristic Polynomial of $G^*$}

Now we will state the result regarding the spectrum of the graph $G^*$. Let us establish some notations first. We denote $A$ and $A^*$ - adjacency matrices of graphs $G$ and $G^*$ respectively; $D$ is a diagonal matrix with entries equal to the row (column) sums of $A$, equivalently - the degrees of corresponding vertices. $P_M(x)$ is a characteristic polynomial of a square matrix $M$.

\begin{theorem}
Let $|V(G)|=n$, and $|V(G^*)|=m$. Then:
\[P_{A^*}(x)=(x+3)^{m-n}P_{A+\frac{1}{2}D}(x+3).\]
In particular, when $G$ is a regular graph with valency $k$ (always even), so $nk = 6m$, the above formula becomes:
\[P_{A^*}(x)=(x+3)^{m-n}P_A (x-\frac{k}{2}+3).\]
\end{theorem}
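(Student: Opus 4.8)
The plan is to encode both sides of the identity through a single vertex--triangle incidence matrix and then exploit the fact that $XX^{T}$ and $X^{T}X$ have the same nonzero spectrum. Let $B$ be the $n\times m$ $\{0,1\}$-matrix whose rows are indexed by $V(G)$ and whose columns are indexed by the triangles of $G$ (i.e. by $V(G^{*})$), with $B_{v,t}=1$ exactly when $v$ is a vertex of the triangle $t$. Since each triangle has three vertices, every column of $B$ has exactly three $1$'s, so $(B^{T}B)_{t,t}=3$; and for $t\neq s$ the entry $(B^{T}B)_{t,s}$ counts the vertices common to the two triangles, which by local linearity is $0$ or $1$ and equals $1$ precisely when $t\sim s$ in $G^{*}$. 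Hence $B^{T}B=3I_{m}+A^{*}$.

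The second ingredient is the evaluation $BB^{T}=\tfrac12 D+A$. For the diagonal, $(BB^{T})_{v,v}$ is the number of triangles through $v$; since every edge at $v$ lies in a unique triangle and each triangle through $v$ uses exactly two of the edges at $v$ (two such triangles cannot share an edge at $v$, or that edge would lie in two triangles), the triangles at $v$ partition the $d(v)$ edges at $v$ into pairs, so this number is $d(v)/2$ — which is also why the valency in the regular case is necessarily even. For $v\neq w$, $(BB^{T})_{v,w}$ is the number of triangles containing both $v$ and $w$, which is $1$ if $vw\in E(G)$ (that triangle being the unique one on the edge $vw$) and $0$ otherwise, since a triangle through two vertices must realize the edge between them. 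Therefore $BB^{T}=A+\tfrac12 D$.

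Next I would invoke the polynomial identity $x^{m}P_{BB^{T}}(x)=x^{n}P_{B^{T}B}(x)$, which follows by computing $\det\!\begin{pmatrix}I_{n}&B\\ B^{T}&xI_{m}\end{pmatrix}$ by two Schur complements: against the top-left block it equals $\det(xI_{m}-B^{T}B)=P_{B^{T}B}(x)$, while against the bottom-right block (for $x\neq 0$) it equals $x^{m-n}\det(xI_{n}-BB^{T})=x^{m-n}P_{BB^{T}}(x)$. Substituting the two computations above gives $x^{m}P_{A+\frac12 D}(x)=x^{n}P_{3I_{m}+A^{*}}(x)=x^{n}P_{A^{*}}(x-3)$. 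Replacing $x$ by $x+3$ and rearranging yields the claimed formula $P_{A^{*}}(x)=(x+3)^{m-n}P_{A+\frac12 D}(x+3)$; when $m<n$ this remains a genuine polynomial identity, since $\operatorname{rank}(BB^{T})\le m$ forces $0$ to be an eigenvalue of $BB^{T}=A+\tfrac12 D$ of multiplicity at least $n-m$, so $(x+3)^{n-m}$ divides $P_{A+\frac12 D}(x+3)$. Finally, if $G$ is $k$-regular then $D=kI$, so $A+\tfrac12 D=A+\tfrac{k}{2}I$ and $P_{A+\frac12 D}(x+3)=P_{A}(x-\tfrac{k}{2}+3)$; and since the $m$ triangles partition $E(G)$ we get $nk=2|E(G)|=6m$.

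The computations here are all short, and I do not anticipate a serious obstacle. The two places that need care are the bookkeeping in the $XX^{T}$/$X^{T}X$ step — keeping the powers of $x$ and the shift by $3$ correctly aligned — and the case $m<n$, where the stated formula carries a negative power of $(x+3)$ and must be read together with the divisibility observation above.
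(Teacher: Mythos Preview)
Your proposal is correct and follows essentially the same approach as the paper: define the vertex--triangle incidence matrix $B$, verify $BB^{T}=A+\tfrac12 D$ and $B^{T}B=A^{*}+3I$, and then use that $BB^{T}$ and $B^{T}B$ have the same nonzero spectrum. Your write-up is in fact more careful than the paper's, supplying the Schur-complement justification, the reason the diagonal of $BB^{T}$ equals $d(v)/2$, the $m<n$ divisibility check, and the count $nk=6m$.
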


\begin{proof}
For a locally linear graph $G$ enumerate, or label, all its vertices with numbers from 1 to $n$, and its triangles from 1 to $m$. Construct an $n \times m$ matrix $B$ in the following way:
\[ B_{i,j} =
  \begin{cases}
    1,  & \quad \text{if vertex } i \text{ contains in a triangle } j;\\
    0,  & \quad \text{otherwise.}
  \end{cases}
\]
This matrix $B$ is somewhat similar to an incidence matrix but instead of edges we are using triangles here. Now consider products $BB^T$ and $B^TB$.
\[(BB^T)_{i,j}=\sum_{k=1}^m B_{i,k} (B^T)_{k,j}=\sum_{k=1}^m B_{i,k} B_{j,k}=\]
\[=
\begin{cases}
	\text{number of triangles incident to a vertex $i$},   \text{if }i=j;\\
    1,   \quad \text{if }i\neq j\text{ and vertices } i  \text{ and } j \text{ both belong to the same triangle, i.e. }i \sim j ;\\
    0,   \quad \text{if }i\neq j\text{ and vertices } i  \text{ and } j \text{ do not belong to the same triangle, i.e. }i \nsim j .
 \end{cases}
 \]
Thus, $BB^T=A+\frac{1}{2}D$.

\noindent Similarly,
\[(B^TB)_{i,j}=\sum_{k=1}^n (B^T)_{i,k} B_{k,j}=\sum_{k=1}^n B_{k,i} B_{k,j}=\]
\[=
\begin{cases}
	3,   \quad \text{if }i=j: &\text{ triangles are the same};\\
    1,   \quad \text{if }i\neq j, i\sim j:&\text{ triangles are distinct and connected};\\
    0,   \quad \text{if }i\neq j, i\nsim j:&\text{ triangles are distinct and are not connected}.
 \end{cases}
 \]
 Thus, $B^TB=A^*+3I$.
 
 Using the fact from linear algebra that the products $BB^T$ and $B^TB$ for any matrix $B$ have the same non zero eigenvalues \cite{Horn, Cvetcovic}, we obtain:
 \[P_{B^TB}(x)=x^{m-n}P_{BB^T}(x).\]
 While,  \[P_{B^TB}(x)=P_{A^*+3I}(x)=P_{A^*}(x-3).\]
 So it becomes, 
 \[P_{A^*}(x)=(x+3)^{m-n}P_{A+\frac{1}{2}D}(x+3).\]
 When $G$ is $k$-regular, $D=kI$ and the result follows.
\end{proof}

Notice that $A+\frac{1}{2}D$ is somehow similar to signless Laplacian $Q=A+D$.

\section*{Reconstructability of the graph $G$ from $G^*$}

In the last section we will show that given a graph $G^*$ with properties described previously, most importantly by Proposition \ref{prop1} and Proposition \ref{prop2}, it can always be reconstructed the locally linear graph $G$.

\begin{theorem}
Given a graph $G^*$ with forbidden subgraphs $K_4-e$ and $K_{1,4}$, it is always possible to construct, or reconstruct, the unique, up to isomorphism, locally linear graph $G$.
\end{theorem}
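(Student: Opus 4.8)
The plan is to reconstruct $G$ directly from the combinatorial data of $G^*$ by identifying, inside $G^*$, the objects that must correspond to the vertices of $G$. The key observation is that a vertex $v$ of $G$ of degree $2t$ lies in exactly $t$ triangles, and those $t$ triangles are pairwise connected through $v$; hence they form a clique of size $t$ in $G^*$. Conversely, by Proposition~\ref{prop1} (no induced $K_4-e$), any two adjacent vertices of $G^*$ — i.e.\ any two triangles sharing a vertex — determine that shared vertex uniquely, and by the argument in the proof of Proposition~\ref{prop1}, any clique of $G^*$ of size $\geq 2$ corresponds to a family of triangles all passing through one common vertex of $G$. So I would define a candidate vertex set for $G$ as follows: take the set of maximal cliques of $G^*$ of size $\geq 2$, together with, for each triangle $x \in V(G^*)$, a bookkeeping mechanism for the vertices of $x$ that are not ``used'' by any such clique (the degree-$2$ vertices of $G$, lying in a single triangle). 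Each triangle $x$ of $G$ has exactly three vertices, so it should be incident to exactly three of these reconstructed vertices.

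The steps, in order, would be: (1) Show that every edge of $G^*$ extends to a unique maximal clique, equivalently that the maximal cliques of size $\geq 2$ partition the edge set of $G^*$ — this is exactly where $K_4-e$-freeness is used, since it forces the ``common vertex'' relation to be transitive on any connected set of mutually-adjacent triangles. (2) Show that each vertex $x \in V(G^*)$ lies in at most three such maximal cliques: this is where $K_{1,4}$-freeness enters, via Proposition~\ref{prop2}, since a triangle has only three vertices and each maximal clique through $x$ ``occupies'' a distinct vertex of the triangle $x$. (3) Define $V(G)$ to be the disjoint union of the set of maximal cliques of size $\geq 2$ and a set of ``private'' vertices, one for each incidence of a triangle $x$ with a vertex not covered by any maximal clique (so that every $x$ gets exactly three associated elements of $V(G)$). (4) Define $E(G)$ by declaring, for each $x \in V(G^*)$, the three associated elements of $V(G)$ to form a triangle, and take $G$ to be the union of these triangles. (5) Verify that $G$ is locally linear — each edge lies in a unique triangle by construction — and that $(G)^* \cong G^*$: triangles of $G$ are exactly the $x$'s, and two of them share a vertex iff they lie in a common maximal clique iff they are adjacent in $G^*$.

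For uniqueness up to isomorphism, I would argue that any locally linear $H$ with $H^* \cong G^*$ must have its vertex set in canonical bijection with the data described above: the vertices of $H$ of degree $\geq 4$ correspond bijectively to the maximal cliques of $G^*$ of size $\geq 2$ (a degree-$2t$ vertex $\leftrightarrow$ a clique of size $t \geq 2$), and the degree-$2$ vertices correspond to the leftover triangle-incidences; adjacency in $H$ is then forced by the triangle structure. Hence the reconstruction is unique.

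The main obstacle I anticipate is step~(1): carefully proving that the maximal cliques of $G^*$ genuinely partition $E(G^*)$ and that this partition is ``consistent'' — i.e.\ that when I assign to a clique $C$ a common vertex $v_C$ of $G$, the assignment $x \mapsto \{v_C : x \in C\}$ really does recover three distinct vertices for each triangle $x$, never collapsing two of them. This amounts to showing that two distinct maximal cliques through the same $x$ cannot share a vertex of the triangle $x$, which is precisely the content that makes the forbidden-subgraph hypotheses both necessary and sufficient; I would isolate this as a lemma before assembling $G$. A secondary subtlety is handling degenerate/small cases (isolated vertices of $G^*$, or $G$ a single triangle) and making sure the ``private vertex'' bookkeeping in step~(3) is well-defined, but these are routine once the clique-partition lemma is in hand.
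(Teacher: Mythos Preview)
Your proposal is correct and is essentially the same construction as the paper's, just phrased in the language of maximal cliques: the paper partitions each $N(x_i)$ into classes of mutually adjacent vertices, adjoins $x_i$ to each class to obtain sets $V_{k,i}$ (at most three per $x_i$), and then identifies two such sets whenever they share at least two elements --- which is exactly the statement that every edge of $G^*$ lies in a unique maximal clique, so that the $V_{k,i}$'s are precisely your maximal cliques of size $\ge 2$ together with singletons $\{x_i\}$ playing the role of your ``private'' vertices. The paper's verification that gluing creates no new triangles (its Cases~1 and~2) corresponds to your step~(5) and to the ``consistency'' lemma you flag as the main obstacle, and its uniqueness claim is the same as yours; so the two arguments coincide in content, differing only in that you name the global object (maximal clique) while the paper builds it locally and then glues.
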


\begin{proof}
Given a graph $G^*$ with vertices $x_1,x_2,...x_m$, for each $x_i$ we partition its neighboring vertices into sets of mutually adjacent vertices. We can do that because of Proposition \ref{prop1}. There will be at most three such sets (Proposition \ref{prop2}), some of them or even all three might be empty sets. We will add $x_i$ into each and arbitrarily label them as $V_{1,i},V_{2,i},V_{3,i}$.

We have:
\[\bigcup_{k=1}^3V_{k,i}=N(x_i)\cup\{x_i\}, \text{ and } V_{k,i}\cap V_{l,i}=\{x_i\}, \text{ for } k\neq l.\]
 
These sets $V_{k,i}$ ($i=\overline{1,m}, k=\overline{1,3}$) we declare a set of vertices of a being constructed graph $G$. Adjacency relationship we define as follows:
\[V_{k,i}\sim V_{l,j} \Leftrightarrow i=j.\]
To the moment $G=mK_3$, the graph consisting of $m$ disconnected graphs. But not all the sets $V_{k,i}$ are distinct, moreover two sets with more than one common elements will be identical. Let us show that. Assume $x,y\in V\cup W$, where $x,y$ are vertices of $G^*$ and $V,W$ are vertices of $G$. If there no other elements in $V$ and $W$, then we are done and they are identical. Otherwise, $z \in V$, which means $z\sim x$ and $z\sim y$. Now if $W$ have only two elements $x$ and $y$ then it has to have, by construction, $z$ as well. Or else, $w\in W$, distinct from $x,y$ and $z$. But then again, $z$ has to belong to $W$ or else we have a diamond, $K_4-e$, in $G^*$. Thus, $V\equiv W$, the sets are identical and we can identify (glue) them.
Summarizing so far, two subsets $V_1,V_2\subset V(G^*)$ as vertices of $G$ are: not adjacent if $|V_1\cap V_2|=0$, adjacent if $|V_1\cap V_2|=1$, identical if $|V_1\cap V_2|\geq 2$.

As a last step, we need to show that this process of gluing didn't create new triangles in $G$. In other words, the edges defined originally are still the sides of the initial triangles and only those ones - no edge, as a result of gluing we have performed, had become a side of more than one triangle. Assume opposite. $V_1\sim V_2$ and the edge $V_1V_2$ belongs to two triangles in $G$.

\textit{Case 1:} The edge belongs to the existing triangles build on $x$ and $y$ from $G^*$. But then both $x,y\in V_1$ and $x,y\in V_2$, which is not possible by construction. For $x$ as a neighbor of $y$ can belong to only one of $y$'s subsets.

\textit{Case 2:} The edge belongs now to a new triangles obtained as a result of gluing. Let $V_3$ is a third vertex of this triangle. Then, there exist three distinct $x,y$ and $z$ from $G^*$ such that $x\in V_1\cap V_2, y\in V_2\cap V_3$ and $z\in V_3\cap V_1$. This means that $x\sim y$ and $x\sim z$ and for the triples defined by $x\in G^*$ they are in a different sets of neighbors which means $y\nsim z$. On the other hand $y,z\in V_3$, meaning that $y\sim z$. Contradiction.

The graph $G$ obtained by our construction is locally linear and strictly defined by $G^*$ up to labeling.
\end{proof}

\subsection*{Appendix: Some Examples}

\begin{figure}
	\includegraphics[width=0.6\textwidth]{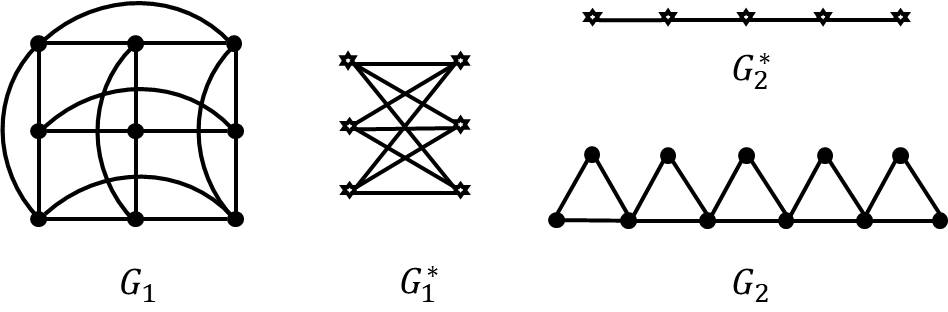}
		\centering
		\caption{Two examples of locally linear graphs and their $G^*$ representations.}
		\label{fig2}
\end{figure}

Consider two examples from Figure \ref{fig2}. $G_1$ is a Paley graph on 9 vertices , $n=9$, and 6 triangles, $m=6$. This is a strongly regular graph with parameters $k=4, \lambda=1,$ and $\mu=2$. Its representation $G_1^*=G^*(G_1)=K_{3,3}$ is a complete bipartite graph.
The characteristic polynomial of $G_1$:
\[P_{G_1}(x)=(x-4)(x-1)^4(x+2)^4\]
\noindent Then \[P_{G_1^*}(x)=(x+3)^{m-n}P_{G_1}(x-\frac{k}{2}+3)=\frac{(x-3)x^4(x+3)^4}{(x+3)^3}=x^4(x-3)(x+3).\]

Similarly, $G_2$, which sometimes is called a triangular snake, is a locally linear graph on 11 vertices and 5 triangles. Its representation $G_2^*$ now is a path on five vertices, $P_5$. Although it is not regular but still with recognizable spectrum. The characteristic polynomial of $A(G_2)$:
\[\begin{array}{ll} P_{G_2}(x)=&(x-3.027)(x-2.446)(x-1.631)(x-0.797)(x-0.201)(x+1)^2 \\ &(x+1.265)(x+1.37)(x+0.594)(x+1.872).
\end{array}\]
The spectrum for $A+\frac{1}{2}D$ will be much neater:
\[P_{A+\frac{1}{2}D}(x)=(x-4.732)(x-4)(x-3)(x-2)(x-1.268)x^6.\]

\noindent Then 
\[\begin{array}{ll} P_{G_2^*}(x)=&(x+3)^{m-n}P_{A+\frac{1}{2}D}(x+3)=\frac{(x-1.732)(x-1)x(x+1)(x+1.732)(x+3)^6}{(x+3)^6}= \\ &(x-1.732)(x-1)x(x+1)(x+1.732).
\end{array}\]


\end{document}